\documentclass[12pt, reqno]{amsart}
\usepackage{amsmath, amsthm, amscd, amsfonts, amssymb, graphicx, color}

\newtheorem{df}{Definition}[section]
\newtheorem{thm}[df]{Theorem}
\newtheorem{pro}[df]{Proposition}

\newtheorem{rema}[df] {Remark}

\textheight 22.5truecm \textwidth 14.5truecm
\setlength{\oddsidemargin}{0.35in}\setlength{\evensidemargin}{0.35in}

\setlength{\topmargin}{-.5cm}
\begin{document}
\setcounter{page}{1}

\title[Dual Properties and Joint Spectra]{Dual Properties and Joint Spectra \\  for solvable Lie Algebras of 
operators}
\author{Enrico Boasso}

\begin{abstract}Given $L$  a solvable Lie Algebra of operators acting on a
Banach space $E$, we study the action of the opposite algebra of $L$,
$L'$, on $E^*$. Moreover, we extend S\l odkowski joint
spectra, $\sigma_{\delta,k}$, $\sigma_{\pi,k}$, and we study its usual spectral
properties.\end{abstract}
\maketitle
\section{Introduction}
\noindent       In [1] we defined a joint spectrum for a finite
dimensional complex solvable Lie algebras of operators $L$ acting on
a Banach space $E$ and we denoted it by $Sp(L,E)$. We also proved that $Sp(L,E)$
is a compact non void subset of $L^{2^\perp}= \{ f\in L\colon f(L^2)=0\}$.
Besides, if $I$ is an ideal of $L$, the projection property holds.
Furthermore, if $L$ is a commutative algebra, this spectrum reduces to the
Taylor joint spectrum ([5]).

                 Let $a$ be an n-tuple of commuting operators acting on $E$,
$a=(a_1,..,a_n)$. Let $a^*$ be the adjoint n-tuple of a, i.e.,
$a^*=(a_1^*,...,a_n^*)$,
where $a_i^*$ is the adjoint operator of $a_i$. Then $a^*$ is an n-tuple of
commuting operators acting on $E^*$, the dual space of $E$. If $\sigma(a)$ (respectively
$\sigma(a^*)$) denotes the Taylor joint spectrum of a (respectively $a^*$), it is
well known that $\sigma(a)=\sigma(a^*)$. If we consider a solvable non
commutative Lie algebra of operators $L$  contained in ${\mathcal L}(E)$, the space of
bounded linear maps on E, its dual, $L^*=\{x^*\colon x\in L\}$ defines a solvable
Lie subalgebra of ${\mathcal L}(E^*)$ with the opposite bracket of L. One may ask if the
joint spectra of L and $L^*$ in the sense of [1] coincide. In the solvable
non commutative case, in general, the answer is no.

                 We study this problem and prove that $Sp(L,E)$ and $Sp(L^*,E^*)$ are
related: one is obtained from the other by a traslation, i.e., $Sp(L,E)=
Sp(L^*,E^*) + c$, where $c$ is a constant. Moreover, we characterize this
constant in terms of the algebra and prove that in the nilpotent case $c=0$.

                 In the second part of our work, we study $\sigma_{\delta,k}$
and $\sigma_{\pi,k}$, the S\l odkowski spectra of [4]. We extend then to the
case of solvable Lie algebras of operators  and verify the usual spectral
properties: they are compact, non void sets and the projection property
for ideals still holds.

                 The paper is organized as follows. In section 2 we review several
definitions and results of [1]. In section 3 we study the relation between $Sp(L,E)$ and
$Sp(L^*, E^*)$, the dual property. In section 4 we extend S\l odkowski spectrum and
prove its spectral properties.
\vskip4pt
\section{Preliminaries}
       \noindent          We shall briefly recall several definitions and results
related to the spectrum of a solvable Lie algebra of operators ([1]).

                 From now on, $L$ denotes a complex finite dimensional solvable 
Lie algebra. $E$ denotes a Banach space on which $L$ acts as right continous
operators, i. e., $L$ is a Lie subalgebra of ${\mathcal L}(E)$.

                 Let f be a character of $L$ and suppose that $n=\dim\hbox{ } L$. Let us
consider the following complex, $(E\otimes\wedge L, d(f))$, where $\wedge L$
denotes the exterior algebra of L and
 
$$ 
d_p(f): E\otimes\wedge^p L\to E\otimes\wedge^{p-1} L, 
$$
\begin{align*} 
 &d_p(f)e\langle x_1\wedge\ldots\wedge x_p\rangle =\sum_{k=1}^{k=p}(-1)^{k+1}
e(x_k-f(x_k))\langle x_1\wedge\ldots \wedge\Hat {x_k}\wedge\ldots \wedge x_p\rangle\\
&+\sum_{1\le k<l \le p} (-1)^{k+l} e\langle[x_k,x_l]\wedge x_1\wedge\ldots
\wedge \Hat {x_k}\wedge\ldots\wedge \Hat {x_l}\wedge\ldots\wedge x_p\rangle, \\\end{align*}
$$\eqno(1)$$

\noindent where $\Hat{}$ means deletion. If $ p\le 0$ or $p\ge n+1$, we also define
$d_p(f)\equiv0$.

                 Let $ H_*\bigl(E\otimes\wedge L, d(f)\bigr)$ denotes
the homology of the complex  $\bigl(E\otimes\wedge L, d(f)\bigr)$.
\vskip4pt
\begin{df}Let L and E be as above. The set $\{ f\in L^*\colon f(L^2)=0 \hbox{ and }
H_*\bigl(E\otimes\wedge L,d(f)\bigr)\ne 0\}$  is the spectrum of $L$ acting
on $E$, and it is denoted by $Sp(L,E)$.\end{df}

          \begin{thm} If L is a commutative Lie algebra, Sp(L,E)
reduces to the Taylor joint spectrum.\end{thm}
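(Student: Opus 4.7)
The plan is to exploit the fact that for a commutative algebra the bracket terms in the differential vanish, reducing the complex $(E\otimes\wedge L, d(f))$ to a standard Koszul complex, whose nonvanishing homology is the defining condition for the Taylor joint spectrum.

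First I would fix a basis $x_1,\ldots,x_n$ of $L$ and observe that since $L$ is commutative, $L^2=[L,L]=0$, so $L^{2\perp}=L^*$; in particular the condition $f(L^2)=0$ in Definition 2.1 is automatic and every $f\in L^*$ is admissible. I would then identify $L^*$ with $\mathbb{C}^n$ via $f\mapsto (f(x_1),\ldots,f(x_n))$.

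Next, in formula (1) the second sum involves brackets $[x_k,x_l]$, which all vanish in the commutative case. The differential therefore collapses to
\[
d_p(f)\,e\langle x_{i_1}\wedge\cdots\wedge x_{i_p}\rangle
=\sum_{k=1}^{p}(-1)^{k+1}e(x_{i_k}-f(x_{i_k}))\langle x_{i_1}\wedge\cdots\wedge\Hat{x_{i_k}}\wedge\cdots\wedge x_{i_p}\rangle,
\]
which is exactly the Koszul differential associated with the commuting $n$-tuple $(x_1-f(x_1),\ldots,x_n-f(x_n))$ acting on $E$. Thus $(E\otimes\wedge L, d(f))$ coincides (up to the basis identification $\wedge L\simeq\wedge\mathbb{C}^n$) with the Koszul complex of this tuple.

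Finally, I would invoke Taylor's definition: $(\lambda_1,\ldots,\lambda_n)$ lies in the Taylor joint spectrum of $(x_1,\ldots,x_n)$ precisely when the Koszul complex of $(x_1-\lambda_1,\ldots,x_n-\lambda_n)$ has nontrivial homology. Combining this with the identification of $f$ with $(f(x_1),\ldots,f(x_n))$, the equivalence $f\in Sp(L,E)\iff (f(x_1),\ldots,f(x_n))\in\sigma_{\mathrm{Taylor}}(x_1,\ldots,x_n)$ is immediate. The only point requiring a brief justification is the basis-independence of the identification, which follows from the fact that a change of basis in $L$ corresponds to an invertible linear change of coordinates in $\mathbb{C}^n$ carrying Koszul complex to Koszul complex, so no genuine obstacle arises; the proof is essentially a matter of recognizing formula (1) as the Koszul differential once the bracket terms disappear.
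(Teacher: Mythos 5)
Your argument is correct and is exactly the intended one: with $L^2=0$ the bracket terms in (1) drop out, the complex $(E\otimes\wedge L,d(f))$ becomes the Koszul complex of $(x_1-f(x_1),\dots,x_n-f(x_n))$, and nonvanishing homology is Taylor's defining condition. The paper itself gives no proof of this statement (it is quoted from reference [1]), and your reasoning reproduces the standard argument from that source.
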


\begin{thm}Sp(L,E) is a compact non void subset of $L^*$.\end{thm}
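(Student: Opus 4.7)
The plan is to verify separately that $Sp(L,E)$ is closed, bounded, and non-void in $L^*$; compactness follows from closedness plus boundedness in the finite-dimensional space $L^*$, and the inclusion $Sp(L,E)\subseteq L^*$ is built into the definition. For closedness I would show the complement is open. The affine condition $f(L^2)=0$ defines the closed subspace $L^{2\perp}$, so any $f\notin L^{2\perp}$ has an open neighborhood disjoint from $Sp(L,E)$. On $L^{2\perp}$, the differentials $d_p(f)$ depend affinely, hence continuously, on $f$ through the scalars $f(x_k)$. If $(E\otimes\wedge L,d(f_0))$ is acyclic, one chooses closed complements of the kernels, uses the open mapping theorem to obtain bounded splittings of the differentials, and invokes the standard perturbation principle for complexes of Banach spaces to conclude that acyclicity persists in a neighborhood of $f_0$.

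For boundedness, fix a basis $x_1,\ldots,x_n$ of $L$ refining the derived series, with $x_{m+1},\ldots,x_n$ spanning $L^2$; any $f\in L^{2\perp}$ is determined by the scalars $f(x_1),\ldots,f(x_m)$. I would prove that whenever $\max_{k\le m}|f(x_k)|$ exceeds a constant depending only on the operator norms $\|x_k\|$ and the structure constants, some $x_k-f(x_k)$ is invertible on $E$ and its inverse assembles into a contracting homotopy for the complex. Since $[x_k,x_l]\in L^2$ and $f$ vanishes on $L^2$, the bracket terms in $d_p(f)$ survive only through the structure constants; one filters $E\otimes\wedge L$ using the filtration of $\wedge L$ induced by the derived series, so that on the associated graded complex the homotopy reduces to the commutative Koszul one, and one then lifts it.

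For non-voidness I would induct on $n=\dim L$. If $n=1$, the algebra is abelian and Theorem 2.2 identifies $Sp(L,E)$ with the non-empty Taylor spectrum of a single operator. For $n\ge 2$, solvability supplies a codimension-one ideal $I\subset L$; by induction $Sp(I,E)\ne\emptyset$, and by the projection property recalled in the introduction the restriction map $L^*\to I^*$ carries $Sp(L,E)$ onto $Sp(I,E)$, so $Sp(L,E)\ne\emptyset$.

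The main obstacle is the boundedness step: passing from invertibility of a single $x_k-f(x_k)$ to exactness of the whole non-commutative complex is not a verbatim copy of the commutative Koszul argument, and the bracket terms in $d_p(f)$ have to be absorbed by a filtration argument whose graded pieces reduce to the commutative setting before the homotopy can be lifted.
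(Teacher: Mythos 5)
First, a remark on provenance: the paper does not prove this statement at all --- Theorem 2.3 is quoted from [1], so there is no internal proof to match against; the closest analogues in the text are Theorem 4.2 (closedness via Taylor's parameterized complexes [5, Theorem 2.1]) and Theorem 4.8 (non-voidness via the projection property onto a codimension-one ideal plus the one-dimensional case). Your closedness and non-voidness steps coincide with that methodology and are sound: the complement of $L^{2\perp}$ is open, acyclicity of a Banach complex is stable under small perturbation of the differentials, and the induction through a flag of ideals (which exists since $L$ is solvable, so $L^{2}\subsetneq L$) reduces non-voidness to the single-operator spectrum via the hard inclusion $Sp(I,E)\subseteq\pi(Sp(L,E))$, which is proved independently of non-voidness.

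The genuine gap is in the boundedness step, and it is exactly where you flag it. Invertibility of $x_{k}-f(x_{k})$ on $E$ is not the right hypothesis, and the filtration you propose cannot repair this: filtering $\wedge L$ by the derived series kills the bracket summands of $d_{p}(f)$, but the resulting ``associated graded differential'' $\sum_{k}(-1)^{k+1}e(x_{k}-f(x_{k}))\langle\cdots\hat{x}_{k}\cdots\rangle$ does not square to zero, because the operators $x_{k}$ still fail to commute on $E$ --- the filtration lives on $\wedge L$ and does nothing to the $E$-factor. So there is no associated graded \emph{complex} to which the commutative Koszul homotopy could apply, and nothing to lift. The correct mechanism is the Cartan homotopy identity for the twisted Chevalley--Eilenberg complex: with $\epsilon(x_{k})$ denoting exterior multiplication by $x_{k}$ on $E\otimes\wedge L$, one computes $d_{p+1}(f)\epsilon(x_{k})+\epsilon(x_{k})d_{p}(f)=(x_{k}-f(x_{k}))\otimes 1-1\otimes\theta(x_{k})$, where $\theta(x_{k})$ is the derivation (5). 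This operator commutes with $d(f)$, so whenever it is invertible --- which holds as soon as $|f(x_{k})|>\|x_{k}\|+\|\theta(x_{k})\|$, since $\theta(x_{k})$ is a fixed operator on the finite-dimensional space $\wedge L$ --- its inverse composed with $\epsilon(x_{k})$ is a contracting homotopy and $f\notin Sp(L,E)$. (Equivalently, one can bound the last coordinate in the inductive step by observing that the connecting map of the long exact sequence in Remark 4.3(iii) is induced by $(x_{n}-f(x_{n}))-\theta(x_{n})$ and is therefore bijective for $|f(x_{n})|$ large.) With this substitution your outline closes; as written, the boundedness argument does not.
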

\vskip4pt
         \begin{thm} (Projection property) Let I be an ideal ol L and
$\pi$ the projection map from $L^*$ onto $I^*$, then
$$ Sp(I,E)= \pi(Sp(L,E)).$$\end{thm}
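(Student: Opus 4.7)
The plan is to mimic the standard proof of the projection property for the Taylor joint spectrum, adapted to the Chevalley--Eilenberg complex of Definition 2.1. Since $L$ is solvable, the ideal $I\triangleleft L$ fits into a chain
$$
I=I_0\triangleleft I_1\triangleleft\ldots\triangleleft I_n=L
$$
with $\dim(I_{k+1}/I_k)=1$, and $\pi$ is the composition of the corresponding restriction maps $\pi_k:I_{k+1}^*\to I_k^*$. It therefore suffices to treat the codimension-one case and then iterate.

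Assume $L=I\oplus\mathbb{C}x$ with $[x,I]\subseteq I$. The decomposition $\wedge L\cong\wedge I\oplus(\wedge I)\wedge x$ realises $(E\otimes\wedge L, d(f))$ as the mapping cone of the chain map $T:=(x-f(x))+L_x$ on $(E\otimes\wedge I, d(g))$, where $g=f|_I$ and $L_x$ is the derivation of $\wedge I$ induced by $[x,\cdot]:I\to I$. Equivalently, there is a short exact sequence of complexes
$$
0\to(E\otimes\wedge I, d(g))\to(E\otimes\wedge L, d(f))\to(E\otimes\wedge I, d(g))[-1]\to 0,
$$
whose long exact sequence in homology reads
\begin{align*}
\ldots&\to H_p(E\otimes\wedge I, d(g))\xrightarrow{T}H_p(E\otimes\wedge I, d(g))\\
&\to H_p(E\otimes\wedge L, d(f))\to H_{p-1}(E\otimes\wedge I, d(g))\to\ldots
\end{align*}

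The inclusion $\pi(Sp(L,E))\subseteq Sp(I,E)$ is immediate from this sequence: if $g\notin Sp(I,E)$, every $H_p(E\otimes\wedge I,d(g))$ vanishes, forcing $H_*(E\otimes\wedge L, d(f))=0$ and so $f\notin Sp(L,E)$. For the reverse inclusion, given $g\in Sp(I,E)$ I would (a) check that $g$ extends to some $f\in L^{2\perp}$, equivalently that $g([L,I])=0$, and then (b) choose $\lambda$ in the non-empty spectrum of the bounded operator $x+L_x$ acting on the non-zero graded Banach space $H_*(E\otimes\wedge I, d(g))$; for such a $\lambda$ the operator $T=(x-\lambda)+L_x$ is not invertible, and the long exact sequence produces $H_*(E\otimes\wedge L, d(f))\ne 0$ with $f=g+\lambda\,\tilde{x}^*$, where $\tilde{x}^*\in L^*$ vanishes on $I$ and sends $x$ to $1$.

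The principal obstacle is step (a): the \emph{a priori} non-obvious vanishing $g([L,I])=0$ for every $g\in Sp(I,E)$. I would establish it by first proving that the coadjoint action of $L$ on $I^*$ preserves $Sp(I,E)$. This invariance follows from a chain-level isomorphism $(E\otimes\wedge I, d(g))\cong(E\otimes\wedge I, d(e^{t\,\mathrm{ad}^*(x)}g))$ induced by the automorphism $e^{t\,\mathrm{ad}(x)}$ of $I$, which is well defined precisely because $I$ is an ideal. The orbit $t\mapsto e^{t\,\mathrm{ad}^*(x)}g$ is then an entire $I^*$-valued function whose range lies in the compact set $Sp(I,E)$; Liouville's theorem forces it to be constant, and differentiating at $t=0$ gives $g\circ\mathrm{ad}(x)=0$ on $I$, i.e.\ $g([L,I])=0$.
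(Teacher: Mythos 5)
Your overall architecture---reduction to a codimension-one ideal through a chain $I=I_0\triangleleft I_1\triangleleft\cdots\triangleleft I_n=L$, the realisation of $(E\otimes\wedge L,d(f))$ as a mapping cone over $(E\otimes\wedge I,d(g))$, and the resulting long exact homology sequence---is exactly the scheme behind the paper's treatment: the theorem itself is quoted from [1], but the same machinery (equations (2)--(4), the short exact sequence (18), Remark 4.3 and Propositions 4.4--4.6) is replayed in Section 4 for the S\l odkowski spectra. The easy inclusion $\pi(Sp(L,E))\subseteq Sp(I,E)$ is fine, and your Liouville argument for step (a)---conjugating the Koszul complex of $I$ by $e^{tx}\otimes\wedge e^{t\,\mathrm{ad}(x)}$, which is legitimate because $I$ is an ideal and the bracket is the operator commutator, and then using compactness of $Sp(I,E)$ to kill the derivative of the orbit---is a correct and self-contained substitute for the paper's appeal to [1, Theorem 3].

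The genuine gap is in step (b). You pick $\lambda$ in ``the non-empty spectrum of the bounded operator $x+L_x$ acting on the non-zero graded Banach space $H_*(E\otimes\wedge I,d(g))$.'' But $H_p(E\otimes\wedge I,d(g))=\ker\tilde d_p/\mathrm{im}\,\tilde d_{p+1}$, and the boundary space $\mathrm{im}\,\tilde d_{p+1}$ need not be closed; the quotient is then a non-Hausdorff topological vector space, not a Banach space, and for a linear map $A$ on such a space there is no resolvent/Liouville argument: $A-\lambda$ can be bijective for every $\lambda\in\mathbb{C}$ (already for abstract vector spaces, e.g.\ multiplication by $t$ on $\mathbb{C}(t)$). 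So non-emptiness of the ``spectrum'' on homology is unjustified, and this is precisely the delicate point of Taylor's proof of the projection property. The standard repair---the one the paper itself uses in Proposition 4.6---is to argue by contradiction at the chain level: if every extension $f$ of $g$ gives an acyclic total complex, the connecting map $\delta_{*p}(f)[k]=[k](x_n-f(x_n))-[\theta(x_n)k]$ is bijective for all values of $f(x_n)$, and one then applies [5, Lemma 3.1] to the analytically parametrized Banach complex (19), which is exact for all parameter values and exact at $\infty$, to conclude $H_p(E\otimes\wedge I,\tilde d(\tilde g))=0$, a contradiction. Without this device (or an equivalent one, such as Waelbroeck-type quotient Banach spaces) your step (b) does not go through; the rest of your outline is sound.
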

\vskip4pt
                 As in [1], we consider an n-1 dimensional ideal of $L$,
$L_{n-1}$, and we decompose $ E\otimes\wedge^p L$ in the following
way

$$E\otimes\wedge^p L=\bigl(E\otimes\wedge^p L_{n-1}\bigr)\oplus
\bigl( E\otimes\wedge^{p-1} L_{n-1}\bigr)\wedge\langle x_n\rangle, $$

\noindent where $ x_n \in L $ and it is such that $L_{n-1}\oplus\langle x_n\rangle=L$ 

                 If $\tilde f$ denotes the restriction of $f$ to $L_{n-1}$,
we may consider the complex $\bigl(E\otimes\wedge^p L_{n-1}, d(\tilde f)\bigr)$

 As $L_{n-1}$ is an ideal of codimension 1 of $L$, we may decompose the
operator $d_p(f)$ as follows

$$ d_p(f)\colon E\otimes\wedge^p L_{n-1}\to E\otimes\wedge^{p-1} L_{n-1},$$

$$ d_p(f)=\tilde d_p(\tilde f), \eqno(2)$$

$$ d_p(f)\colon E\otimes\wedge^{p-1} L_{n-1}\wedge\langle x_n\rangle\to
E\otimes\wedge^{p-1} L_{n-1}\oplus E\otimes\wedge^{p-2} L_{n-1}\wedge \langle
x_n\rangle, $$

$$  d_p(f)(a\wedge\langle x_n\rangle)= (-1)^p L_p(a)+
(\tilde d_{p-1}(\tilde f)(a)) \wedge\langle x_n\rangle, \eqno(3) $$

\noindent where $ a  \in E\otimes\wedge^{p-1} L_{n-1}$ and $L_p$ is the the
bounded linear endomorphism defined on $E\otimes\wedge^{p-1} L_{n-1}$  by

\begin{align*} 
 L_p e\langle x_1\wedge\ldots\wedge x_{p-1}\rangle
&= -e(x_n-f(x_n))\langle x_1\wedge\ldots \wedge x_{p-1}\rangle\\
&+\sum_{1\le k \le p-1} (-1)^{k+1} e\langle[x_k,x_n]\wedge x_1\wedge\ldots
\wedge \Hat {x_k}\wedge\ldots\wedge x_{p-1}\rangle, \\ \end{align*}
$$\eqno(4)$$
\noindent where $\Hat{}$ means deletion and $x_i$ $(1\le i\le p-1)$ belongs to $L_{n-1}$.
\vskip4pt
                 Now we consider the following morphism defined in [3,2].

                 Let $\theta(x_n)$ be the derivation of $\wedge L$ that extends the map $ ad(x_n)$, \par
 $ad(x_n)(y)= [x_n,y]$ ($y\in L$),

$$ \theta(x_n)\langle x_1\wedge\ldots\wedge x_p\rangle=
\sum_{i=1}^p\langle x_1\wedge\ldots\wedge ad(x_n)(x_i)\wedge\ldots
\wedge x_p\rangle. \eqno(5)                      $$

$ \theta(x_n)$ satisfies :
$$
\theta(x_n)(ab)= (\theta(x_n)a)b + a(\theta(x_n)b), \eqno(6) 
$$

$$
\theta(x_n)w= w\theta(x_n), \eqno(7)
$$

\noindent where $w\colon \wedge L\to \wedge L$ is the map
$$
w(\langle x_1\wedge\ldots\wedge x_p\rangle) = (-1)^p\langle x_1\wedge\ldots\wedge x_p\rangle.
\eqno(8)
$$

\indent Let $u$ belong to $\wedge L$ and $\epsilon(u)$ be the following endomorphism defined of $\wedge L$:
$\epsilon(u)v= u\wedge v$ ($v\in\wedge L$). As $(\wedge L)^*$ may be
identified with $\wedge L^*$, let $\iota(u)$ be the dual map of $\epsilon(u)$,
$(\iota(u)\colon \wedge L^*\to\wedge L^*)$.

\indent Besides, we consider $\theta^*(x_n)$, the dual map of $-\theta(x_n)$.\par
\indent As $\epsilon(u\wedge v) = \epsilon(u)\epsilon(v)$, $\iota(w\wedge z)=
\iota(w)\iota(z)$ ($u$, $v\in\wedge L$, $w$, $z\in\wedge L^*$).\par
\indent As in [3,7] we define an isomorphism $\rho$
$$\rho\colon \wedge L^*\to\wedge L,$$
$$\rho(a) = \iota(a).w, \eqno(9)$$
\noindent where $a$ $\in \wedge L^*$, $w =\langle x_1\wedge\ldots\wedge x_{n-1}\wedge x_n\rangle$
and $\{ x_1\ldots x_n\}$ is a basis of $L_{n-1}$. Note that  $\rho$ applies $\wedge^p
L^*$ isomorphically onto $\wedge^{n-p} L$.\par
\vskip4pt
\section{The Dual Property}
\noindent Let $L$ and $E$ be as in section 2. Let $E^*$ be the space of
continous functionals on $E$. Let $L^{'}$ be the solvable Lie algebra 
defined as follows: as vector space, $L=L^{'}$, and the bracket of $L^{'}$ is
the opposite of the one of $L$; that is: $ [x,y]^{'} = -[x,y] =[y,x]$.
$L^{'}$is a complex finite dimensional solvable Lie algebra and $L^{2^\perp} =
L^{'{2^\perp}}$.\par
\indent As $L$ acts as right continous operators on $E$, the space $L^*=
\{x^*\colon x\in L\}$ has the Lie structure of the algebra $L^{'}$ and acts as 
right continous operators on $E^*$.\par
\indent Observe that in the definition of $\epsilon$, $\iota$  and $\rho$,
we only consider the structure of $L$ as vector space. As $L$ and $L^{'}$
coincide as vector spaces, then $\wedge L = \wedge L^{'}$ and we may consider
$$
\rho\colon\wedge L^*\to\wedge L^{'}.
$$

\indent If $L_{n-1}$ is an ideal of codimension 1 of $L$, $L^{'}_{n-1} = L_{n-1}$
is an ideal of codimension 1 of $L^{'}$. Moreover, if $x_n\in L$ is such that
$L_{n-1}\oplus\langle x_n\rangle =L$, then $L^{'}_{n-1}\oplus\langle x_n\rangle = L^{'}$.\par
\indent Let $\theta^{'}(x_n)$ be the derivation of $\wedge L^{'}$ that extends
the map $ad(x_n)$, $ad(x_n)(y) =[x_n,y]^{'}$. By [2, Chapter V, Section 3], there exist a basis
of $L$, $\{x_i\}_{ 1\le i\le n}$, such that 
$$
[x_j, x_i]= \sum_{h=1}^i c_{ij}^h x_h \hbox{    (i \textless j)} \eqno(10)
$$
and $L_{n-1}$ has the basis $\{x_i\}_{1\le i\le n-1}$. If $w= \langle x_1\wedge\dots\wedge x_n\rangle$,
then
\begin{align*}
\theta(x_n)w&= \theta(x_n)\langle x_1\wedge\dots\wedge x_n\rangle\\
            &=\sum_{i=1}^n \langle x_1\wedge\dots\wedge
x_{i-1}\wedge [x_n,x_i]\wedge\dots\wedge x_n\rangle\\
 &=(\sum_{i=1}^{n-1} c_{in}^i)  \langle x_1\wedge\dots\wedge x_n\rangle \\
&=(\hbox{\rm trace } \theta(x_n)(x_n)) w.\\
\end{align*}

\indent As in [3,7], if $a\in \wedge L^*$, we have
$$
\rho\theta^*(x_n)= \theta(x_n)\rho(a) - \iota(a)\theta(x_n)w. \eqno(11)
$$
Then
$$ \rho\theta^*(x_n)= \theta\rho - (\hbox{\rm trace }  ad(x_n))\rho. \eqno(12)$$

As $L^{'}$ has the opposite bracket of $L$,
$$
\rho\theta^*(x_n)= -(\theta^{'}(x_n) + \hbox{\rm trace } ad(x_n))\rho. \eqno(13)
$$
\indent Let us consider the maps $1_{E^*}\otimes\theta(x_n)$, 
$1_{E^*}\otimes\rho$, $1_{E^*}\otimes\theta(x_n)$, $1_{E^*}\otimes\theta^{'}(x_n)$
and let us still denote then by $\theta^*(x_n)$, $\rho$, $\theta(x_n)$,
$\theta^{'}(x_n)$, respectively. We observe that formula (11),
(12), (13) remain true.\par
\indent Let us decompose, as in section 2, $E^*\otimes\wedge^p L^*$ 
( $E^*\otimes\wedge^{n-p} L^{'}$, respectively) as the sum
$$
 E^*\otimes\wedge^p L^*_{n-1}\oplus E^*\otimes\wedge^{p-1} L_{n-1}^*\wedge\langle x_n\rangle,
$$
$$
E^*\otimes\wedge^{n-p} L^{'}_{n-1}\oplus E^*\otimes\wedge^{n-p-1}L_{n-1}^{'}\wedge\langle x_n\rangle \hbox{(respectively)},
$$
where $L^*_{n-1}$ is the subspace of $L^*$ generated by $\{y_j\}_{1\le j\le n-1}$
($\{y_j\}_{ 1\le j \le n}$ is the dual basis of $\{x_j\}_{ 1\le j\le n}$).\par
\indent A standard calculation shows the following facts:
$$
\rho(E^*\otimes\wedge^p L^*_{n-1})= (E^*\otimes\wedge^{n-p-1}L^{'}_{n-1})\wedge \langle x_n\rangle,\eqno(14)
$$
$$
\rho(E^*\otimes\wedge^{p-1}L_{n-1}\wedge\langle y_n\rangle)=E^*\otimes\wedge^{n-p} L^{'}_{n-1}, \eqno(15)
$$
$$
\rho\vert E^*\otimes\wedge^p L^*_{n-1} = \rho_{n-1}\wedge\langle
x_n\rangle, \eqno(16)
$$
$$
\rho\vert E^*\otimes\wedge^{p-1} L_{n-1}\wedge\langle y_n\rangle
=\rho_{n-1}(-1)^{n-p}, \eqno(17)
$$
where $\rho_{n-1}$ is the isomorphism associated to the algebra $L_{n-1}$.
\vskip4pt 

\begin{pro} Let $L$, $L{'}$, $E$, $L_{n-1}$ and $\rho$ be as above
and let $f$ belong to $L^{2^\perp}$ and $g$ to ${L^{'}}^{2^\perp}$ such that
$f(x_n) + \hbox{\rm trace }\theta(x_n) =g(x_n)$.
Then, the following diagram commutes:

$$
\CD
E^*\otimes\wedge^p L^*_{n-1} @>L^*_{p+1}>>E^*\otimes\wedge^p L^*_{n-1}\wedge\langle y_n\rangle \\
@V\rho VV    @VV\rho V \\
E^*\otimes\wedge^{n-p-1} L_{n-1}^{'}\wedge\langle x_n\rangle@>L^{'}_{n-p}>>E^*\otimes\wedge^{n-p-1} L^{'}_{n-1}, \\ 
\endCD
$$
         
\noindent where  $L^{'}_{n-p}$ is the operator involved in the definition of $d^{'}_{n-p}(g)$
and $L^*_{p+1}$ is the adjoint operator of $L_{p+1}$, which is involved in the definition
of $d_{p+1}(f)$ (see (3), (4)).\end{pro}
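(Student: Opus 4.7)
The plan is to reduce commutativity of the diagram to the subalgebra version of formula (13) for $\rho_{n-1}$, after decomposing both $L^*_{p+1}$ and $L'_{n-p}$ into three pieces whose interaction with $\rho_{n-1}$ can be analyzed separately.

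First I would rewrite (4) as
$$L_{p+1} = -\xi(x_n) + f(x_n)\,\mathrm{id} - \theta(x_n)$$
on $E \otimes \wedge^p L_{n-1}$, where $\xi(x_n)(e \otimes v) = (e\cdot x_n) \otimes v$ and the restriction of $\theta(x_n)$ to $\wedge^p L_{n-1}$ is well defined because $L_{n-1}$ is an ideal. Dualizing, and using $\theta^*(x_n) = -\theta(x_n)^*$, gives $L^*_{p+1} = -\xi^*(x_n) + f(x_n)\,\mathrm{id} + \theta^*(x_n)$ on $E^* \otimes \wedge^p L^*_{n-1}$. The identical calculation on the $L'$-side yields $L'_{n-p} = -\xi^*(x_n) + g(x_n)\,\mathrm{id} - \theta'(x_n)$ on $E^* \otimes \wedge^{n-p-1} L'_{n-1}$.

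Next I would establish the subalgebra version of (13),
$$\rho_{n-1}\,\theta^*(x_n) = -\bigl(\theta'(x_n) + \mathrm{trace}\,ad(x_n)\bigr)\rho_{n-1}$$
as maps $\wedge L^*_{n-1} \to \wedge L'_{n-1}$. The derivation repeats (11)--(13) verbatim with $w$ replaced by $w_{n-1} = \langle x_1\wedge\dots\wedge x_{n-1}\rangle$; the only new check is $\theta(x_n)w_{n-1} = (\mathrm{trace}\,ad(x_n))w_{n-1}$, which follows from (10) exactly as in the calculation displayed before (11), the trace on $L_{n-1}$ agreeing with the trace on $L$ because $[x_n,x_n]=0$. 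With this identity I would chase $\omega = e^*\otimes u$ through both routes: the $\xi^*$- and scalar-pieces pass through $\rho_{n-1}$ trivially (they act on different tensor factors), while the $\theta^*(x_n)$-piece in the top route is converted by the subalgebra identity into a $-\theta'(x_n)$-piece plus a scalar shift by $-\mathrm{trace}\,ad(x_n)$. The hypothesis $g(x_n) = f(x_n) + \mathrm{trace}\,\theta(x_n)$ absorbs this scalar shift, so the two routes coincide once the signs from (16), (17), and the $(-1)^{p+1}$ and $(-1)^{n-p}$ factors implicit in the block decomposition (3) of the differentials are collected.

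The main obstacle is the sign bookkeeping: minus signs enter from (4), from the dual pairing used to form adjoints, from the restrictions (16)--(17) of $\rho$, and from the block form (3) of the differentials. Breaking $L^*_{p+1}$ and $L'_{n-p}$ into the three-piece form above isolates each source of sign so that all can be tracked independently and combined at the end.
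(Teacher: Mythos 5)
Your proposal is correct and follows essentially the same route as the paper: decompose $L_{p+1}$ (and $L'_{n-p}$) into the $x_n$-action, the scalar $f(x_n)$ (resp.\ $g(x_n)$), and the $\theta$-term, and observe that passing $\theta^*(x_n)$ through $\rho$ turns it into $-\theta'(x_n)$ plus the scalar shift $-\mathrm{trace}\,ad(x_n)$, which the hypothesis $g(x_n)=f(x_n)+\mathrm{trace}\,\theta(x_n)$ absorbs exactly. The only difference is that the paper applies identity (13) for the global $\rho$ directly, whereas you re-derive its analogue for $\rho_{n-1}$ and then track the restrictions (16)--(17); this is a harmless, arguably more careful, variant of the same argument rather than a different one.
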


\begin{proof}
By (4), (5)
$$
L_{p+1} = -(x_n-f(x_n)) + \theta(x_n).
$$
Then,
$$
 L^*_{p+1}= -(x_n^*-f(x_n)) - \theta^*(x_n).
$$
As the bracket of $L^{'}$ is the opposite of the one of $L$, by (4) and (5),
$$
L^{'}_{n-p} = -(x^*_n - g(x_n)) + \theta(x_n).
$$

Then
$$
 L_{n-p}^{'}\rho = -(x_n^*-g(x_n))\rho + \theta^{'}(x_n)\rho.
$$
On the other hand, by (13)

\begin{align*} \rho L^*_{p+1} &= -(x^*_n-f(x_n))\rho - \rho\theta^*(x_n)\\
&= -(x^*_n - f(x_n))\rho  + \theta^{'}(x_n)\rho + \hbox{ \rm trace }f(x_n)\rho\\
&= -(x^*_n -(f(x_n) + trace(x_n)))\rho + \theta^{'}\rho\\
&= -(x^*_n - g(x_n))\rho + \theta^{'}(x_n)\rho\\
&= L^{'}_{n-p}\rho.\\ \end{align*}
\end{proof}

\begin{thm} Let $L$, $L^{'}$, $E$,  and $\rho$ be as above. Let 
$\{x_i\}_{1\le i\le n}$ be the basis of $L$ defined in (10). Let $f$ belong to
$L^{2^\perp}$ and $g$  to ${L^{'}}^{2\perp}$ such that
$$
g = f + (\hbox{\rm trace }\tilde\theta(x_1),\dots , \hbox{\rm trace }\tilde\theta(x_n)),
$$ 
\noindent where $\tilde\theta(x_i)$ is the restriction of $\theta(x_i)$ to $L_i$,
the ideal generated by $\{x_l\}_{1\le l\le i}$ ($1\le i\le n$).
Then, if we consider the adjoint complex of $(E\otimes\wedge L, d(f))$
and the complex $(E\otimes\wedge L^{'}, d(g))$, for each p,
$$
d^{'}_{n-p}(g)\rho w = \rho d^*_{p+1}(f).
$$
That is, the following diagram commutes:
$$
\CD
E^*\otimes\wedge^p L @>d^*_{p+1}>> E^*\otimes\wedge^{p+1} L\\
@V\rho w VV @VV\rho V\\
E^*\otimes\wedge^{n-p} L^{'} @>d^{'}_{n-p}>> E^*\otimes\wedge^{n-p-1} L^{'},\\
\endCD
$$
\noindent where $w$ is the map of (8).\end{thm}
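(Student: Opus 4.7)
The plan is to prove the identity $d'_{n-p}(g)\rho w = \rho\, d^*_{p+1}(f)$ by induction on $n = \dim L$, using the codimension-one decomposition along the ideal $L_{n-1}$. For $n=1$ the algebra is abelian, all traces vanish so that $f=g$, and both sides reduce to the same scalar shift; this handles the base case.

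For the inductive step, decompose
\[
E^*\otimes\wedge^p L^* = \bigl(E^*\otimes\wedge^p L^*_{n-1}\bigr) \oplus \bigl(E^*\otimes\wedge^{p-1} L^*_{n-1}\wedge\langle y_n\rangle\bigr)
\]
and analogously decompose the three other vertices of the square (along $\langle x_n\rangle$ for the $L'$-terms). By formulas (2)--(3) applied to $d_{p+1}(f)$ and passing to adjoints, $d^*_{p+1}(f)$ becomes a $2\times 2$ block operator whose diagonal blocks are $\tilde d^*_{p+1}(\tilde f)$ and $\tilde d^*_p(\tilde f)$ and whose off-diagonal block is $(-1)^{p+1} L^*_{p+1}$ (the operator of Proposition 3.1). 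The same analysis for the $L'$-side writes $d'_{n-p}(g)$ as a $2\times 2$ block operator with diagonal blocks $\tilde d'_{n-p}(\tilde g)$, $\tilde d'_{n-p-1}(\tilde g)$ and off-diagonal block $(-1)^{n-p} L'_{n-p}$. Formulas (14)--(17) in turn show that $\rho$ and $\rho w$ are block anti-diagonal, with non-zero entries of the form $\pm\rho_{n-1}$, where $\rho_{n-1}$ is the isomorphism associated with $L_{n-1}$.

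With these decompositions, the identity $d'_{n-p}(g)\rho w = \rho\, d^*_{p+1}(f)$ breaks into four block-wise equalities. Two of them reduce, after matching the anti-diagonal entries of $\rho$ on the two sides, to the statement of the theorem for $L_{n-1}$ with the restricted functionals $\tilde f, \tilde g$; the required relation $\tilde g = \tilde f + (\hbox{\rm trace } \tilde\theta(x_1),\dots,\hbox{\rm trace } \tilde\theta(x_{n-1}))$ inside $L^*_{n-1}$ holds because $\tilde\theta(x_i)$ depends only on $L_i \subseteq L_{n-1}$ for $i\le n-1$, so the inductive hypothesis applies in degrees $p$ and $p-1$. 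The remaining two equalities reduce to Proposition~3.1, whose hypothesis $g(x_n) = f(x_n) + \hbox{\rm trace }\theta(x_n)$ is exactly the $n$-th coordinate of the relation between $f$ and $g$.

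The principal obstacle is the sign bookkeeping: the $(-1)^p$ coming from $w$, the $(-1)^{n-p}$ of (17), the $(-1)^{p+1}$ of (3), and the transposition inherent in taking adjoints must combine precisely so that equality of maps (not merely equality up to sign) holds on each block. A subsidiary point is to verify that the operator $L^*_{p+1}$ of Proposition 3.1, viewed as a map into $E^*\otimes\wedge^p L^*_{n-1}\wedge\langle y_n\rangle$, coincides with the block entry obtained by dualizing $L_{p+1}$ of (3)--(4); this is a routine identification once the decompositions are fixed.
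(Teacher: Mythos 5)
Your proposal is correct and follows exactly the route the paper intends: the paper's own proof of this theorem is the single sentence ``By means of an induction argument the proof may be derived from Proposition 3.1,'' and your induction on $\dim L$ via the codimension-one decomposition, with the off-diagonal blocks handled by Proposition 3.1 and the diagonal blocks by the inductive hypothesis applied to $L_{n-1}$ with $\tilde f$, $\tilde g$, is precisely that argument spelled out. You in fact supply more structure (the block matrices from (2)--(3) and (14)--(17), and the observation that the trace vector restricts correctly to $L_{n-1}$) than the paper does, with only the final sign bookkeeping left unverified.
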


\begin{proof}
By means of an induction argument the proof may be derived from Proposition 3.1.\end{proof}

\begin{thm} Let $L$, $L^{'}$ and $E$ be as above. If we consider
the  basis of $L$  $\{x_i\}_{1\le i\le n}$ defined in (10)), then in terms of the dual basis
in $L^* (= L^{'})$ we have
$$
Sp(L,E) + (\hbox{\rm trace }\tilde\theta(x_1),\dots ,\hbox{ \rm trace }\tilde\theta(x_n)) = Sp(L^{'},E^*),
$$
\noindent where $\tilde\theta(x_i)$ is as in Proposition 3.1.\end{thm}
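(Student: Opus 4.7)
The strategy is to combine the intertwining provided by Theorem 3.2 with the standard duality between a Banach complex and its adjoint. Fix $f\in L^{2\perp}$ and let $c\in L^*$ be defined by $c(x_i)=\mathrm{trace}\,\tilde\theta(x_i)$, so that $g=f+c$ is the candidate character of $L'$. The claim reduces to showing
$$
f\in Sp(L,E)\;\Longleftrightarrow\;g\in Sp(L',E^*),
$$
after which letting $f$ vary over $L^{2\perp}$ yields the announced translation identity.

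Before invoking Theorem 3.2, I would first verify that $c$ lies in $L^{2\perp}={L'}^{2\perp}$, so that $g$ is legitimate. Using the adapted basis (10), $\mathrm{ad}(x_j)$ sends every $x_k$ with $k>j$ into $L_j$, so in this basis its matrix on $L$ is block upper-triangular with vanishing lower-right block. Hence $\mathrm{trace}(\mathrm{ad}(x_j)|_{L_j})=\mathrm{trace}(\mathrm{ad}(x_j)|_L)$, and $c$ is the character $x\mapsto\mathrm{trace}\,\mathrm{ad}(x)$, which automatically kills $[L,L]$.

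Next, Theorem 3.2 supplies an isomorphism between the adjoint complex $(E^*\otimes\wedge L,d^*(f))$ and the $L'$-complex $(E^*\otimes\wedge L',d(g))$, realized by the maps $\rho w$ and $\rho$ with degree shift $p\mapsto n-p$. Since each component of $\rho$ is a vector-space isomorphism and all the squares commute, the induced maps on (co)homology are isomorphisms, so the adjoint complex is acyclic if and only if $H_*(E^*\otimes\wedge L',d(g))=0$.

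The remaining step is the duality principle for Koszul-type complexes of Banach spaces: $H_*(E\otimes\wedge L,d(f))=0$ iff $H^*(E^*\otimes\wedge L,d^*(f))=0$. Combined with the previous step, this gives the required equivalence. The main obstacle is precisely this last duality, which in the Banach setting requires closed range for each differential; I would obtain it by induction on $\dim L$ along the flag $L_1\subset\cdots\subset L_n$, paralleling the codimension-one ideal decomposition recalled in Section 2, so that at each step one reduces to the classical scalar case where closed range is automatic from exactness of a Banach complex of finite length, as in Taylor's treatment of the commutative spectrum.
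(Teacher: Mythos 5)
Your argument is essentially the paper's proof: the paper deduces the theorem from Theorem 3.2 together with S\l odkowski's duality lemma [4, Lemma 2.1], which is precisely your final step (a finite complex of Banach spaces is exact iff its adjoint complex is, via the closed range theorem, exactness forcing closed ranges) --- the induction along the flag is unnecessary for that lemma, but the substance is the same. Your preliminary verification that $c(x)=\hbox{\rm trace }\mathrm{ad}(x)$ annihilates $L^2$, so that $g\in {L'}^{2^\perp}$, is a worthwhile point the paper leaves implicit.
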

\begin{proof}
Is a consequence of Theorem 3.2 and [4, Lemma 2.1]. \end{proof}

\begin{thm} If $L$ is a nilpotent Lie Algebra, then
$$
Sp(L,E) = Sp(L^{'},E^*).
$$
\end{thm}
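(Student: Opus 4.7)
The plan is to invoke Theorem 3.3 and reduce the claim to showing that the translation vector
$(\text{trace }\tilde\theta(x_1),\dots,\text{trace }\tilde\theta(x_n))$ vanishes when $L$ is nilpotent. Once this is established, the equality $Sp(L,E) = Sp(L',E^*)$ is immediate.

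To verify that each $\text{trace }\tilde\theta(x_i)$ is zero, I would first recall the definition: $\tilde\theta(x_i)$ is the restriction of $\theta(x_i)$ to $L_i$, and on $L_i$ it acts as $ad(x_i)|_{L_i}$ (since $\theta(x_i)$ extends $ad(x_i)$ as a derivation of $\wedge L$, and $L_i \subset L$ sits inside $\wedge^1 L$). Because $L$ is nilpotent, Engel's theorem gives that $ad(x)$ is a nilpotent endomorphism of $L$ for every $x \in L$. In particular, $ad(x_i)$ is nilpotent.

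Next, note that $L_i$, being the ideal generated by $\{x_l\}_{1 \le l \le i}$, is stable under $ad(x_i)$, so $ad(x_i)|_{L_i}$ is a well-defined endomorphism of $L_i$. The restriction of a nilpotent endomorphism to an invariant subspace is again nilpotent, hence $ad(x_i)|_{L_i}$ is nilpotent, and therefore
\[
\text{trace }\tilde\theta(x_i) = \text{trace}\bigl(ad(x_i)|_{L_i}\bigr) = 0
\]
for every $i = 1,\dots,n$. Thus the translation vector in Theorem 3.3 vanishes identically, yielding $Sp(L,E) = Sp(L',E^*)$.

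There is no real obstacle here: the argument is a direct application of Theorem 3.3 combined with the standard fact that in a nilpotent Lie algebra all adjoint operators, and hence their restrictions to invariant subspaces, have zero trace. The only subtlety worth stating explicitly is that $L_i$ is $ad(x_i)$-invariant, which is clear from it being an ideal of $L$ (or even just from being an $ad(x_i)$-invariant subspace by construction via the triangular basis of (10)).
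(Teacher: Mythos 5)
Your proposal is correct and follows the same overall reduction as the paper: both apply Theorem 3.3 and then show that the translation vector $(\operatorname{trace}\tilde\theta(x_1),\dots,\operatorname{trace}\tilde\theta(x_n))$ vanishes in the nilpotent case. The difference lies in how the vanishing is obtained. The paper invokes [2, Chapter V, Section 1] to choose the basis of (10) so that $[x_j,x_i]=\sum_{h=1}^{i-1}c^h_{ji}x_h$ for $j>i$; each $ad(x_i)\vert_{L_i}$ is then strictly triangular and hence traceless. You instead argue intrinsically via Engel's theorem: $ad(x_i)$ is a nilpotent endomorphism, $L_i$ is an ideal (hence $ad(x_i)$-invariant), and the restriction of a nilpotent endomorphism to an invariant subspace is nilpotent, so its trace is zero. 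Your route is marginally more robust, since it applies to any basis of the form (10) without re-selecting it (so there is no need to worry about the basis-dependence of the coordinates in Theorem 3.3), and it makes explicit the invariance of $L_i$ that the paper leaves implicit. Of course the two arguments rest on the same underlying fact, since the existence of the Bourbaki basis is itself a consequence of Engel's theorem; either way the conclusion $Sp(L,E)=Sp(L',E^*)$ follows.
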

\begin{proof}
By [2, Chapter V, Section 1], the basis $\{x_i\}_{1\le i\le n}$ of (10) may be chosen such that
$$
[x_j,x_i] = \sum_{h=1}^{i-1} c^h_{ji} x_h\hbox{     (j\textgreater i)}.
$$
Then $\tilde\theta(x_i) = 0$.
\end{proof}

\begin{rema}\rm    
Let $L$ be a solvable Lie Algebra. Let $n=dim\, (L)$ and $k=\ dim\,  (L^2)$. By [2, Chapter V, Section 3],
the basis of (10) may be chosen such that $\{x_j\}_{1\le j\le k}$ generates
$ L^2$ . As $L^2$ is a nilpotent ideal of $L$, $\hbox{ \rm trace }\theta(x_i) = 0$, 
$1\le i\le k $. Then, we have the following proposition.
\end{rema}

\begin{pro} Let $L$, $L^{'}$, and $E$ be as usual. Let us consider a 
basis of $L$ as in the previous remark. Then
$$
Sp(L,E) + (0,\ldots , 0,\hbox{ \rm trace }\tilde\theta(x_{k+1}),\ldots ,\hbox{\rm trace }\tilde\theta(x_n)) = Sp(L^{'},E^*)
$$
\end{pro}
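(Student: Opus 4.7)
\medskip
\noindent\textbf{Proof proposal.} The plan is to obtain this proposition as an immediate specialisation of Theorem 3.4 to the basis singled out in Remark 3.5: once one verifies that the first $k$ coordinates of the translation vector are forced to vanish, the statement follows by pure substitution into the formula already proved.

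First I would simply quote Theorem 3.4 for the chosen basis $\{x_i\}_{1\le i\le n}$, which delivers
\[
Sp(L,E) + (\hbox{trace }\tilde\theta(x_1),\dots,\hbox{trace }\tilde\theta(x_n)) = Sp(L^{'},E^*).
\]
The substantive step is then to show $\hbox{trace }\tilde\theta(x_i)=0$ for every $1\le i\le k$. For this I would use that, since $L$ is solvable over $\mathbb{C}$, the derived ideal $L^2$ is nilpotent (the same reference [2, Chapter V] already invoked in Remark 3.5). For any $x\in L^2$ one has $\mathrm{ad}_L(x)(L)\subset L^2$, and further iterates stay inside $L^2$ while descending through its lower central series; hence $\mathrm{ad}_L(x)$ is nilpotent on all of $L$. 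Because the chosen basis has $\{x_j\}_{1\le j\le k}$ generating $L^2$, we have $x_i\in L^2$ for $i\le k$; moreover, the ideal $L_i$ is $\mathrm{ad}(x_i)$-invariant, so $\tilde\theta(x_i)=\mathrm{ad}(x_i)|_{L_i}$ is the restriction of a nilpotent operator to an invariant subspace, hence itself nilpotent and so of trace zero.

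I do not anticipate any real obstacle: Theorem 3.4 carries all the technical content, and the vanishing of the first $k$ trace coordinates is a direct consequence of the nilpotence of $L^2$. The only minor bookkeeping is checking that the convention for $\tilde\theta(x_i)$ (restriction of $\theta(x_i)$ to $L_i$) coincides with $\mathrm{ad}(x_i)$ on $L_i$ when viewed as an endomorphism of a $1$-dimensional graded piece, which is immediate from the definition in~(5).
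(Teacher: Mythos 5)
Your argument is correct and is exactly the paper's: you invoke the general translation formula $Sp(L,E)+(\hbox{trace }\tilde\theta(x_1),\dots,\hbox{trace }\tilde\theta(x_n))=Sp(L',E^*)$ (which is Theorem 3.3 in the paper's numbering, not 3.4) and then kill the first $k$ coordinates using the nilpotency of the ideal $L^2$, which is precisely what Remark 3.5 does. Your extra verification that $\mathrm{ad}(x)$ is nilpotent on $L$ for $x\in L^2$ (via $\mathrm{ad}(x)(L)\subset L^2$ and the lower central series of $L^2$), hence traceless on the invariant subspace $L_i$, merely fills in a detail the paper leaves implicit.
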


\indent Then, as we have seen, if $L$ is a nilpotent Lie algebra, the dual
property is essentially the one of the commutative case. However, if $L$ 
is a solvable non nilpotent  Lie algebra, this property fails. For example,
if $L$ is the algebra
$$
L = \langle x_1\rangle\oplus\langle x_2\rangle,\,\,\,\, \hbox{       } [x_2, x_1] = x_1,
$$
then,
$$
\hbox{ trace }\theta(x_2) = 1,\,\,\,  \hbox{           }   \hbox{ trace }\theta(x_1) =0
$$
and
$$
Sp(L,E) + (0,1) = Sp(L^{'},E^*)
$$

\section{The extension of Slodkowski joint spectra}

\noindent Let $L$ and $E$ be as in section 2. We give an homological version of  S\l odkowski
spectra insteed of the cohomological one of [4].\par
\indent Let $\varSigma_p(L,E)$ be the set $\{ f \in L^{2^\perp} \colon
H_p(E\otimes\wedge L, d(f))\ne 0\}$ = $\{ f \in L^{2^\perp} \colon R(d_{p+1}(f))\ne
Ker(d_p(f))\}$.\par
\begin{df} Let $L$ and $E$ be as in section 2.
$$
\sigma_{\delta , k}(L,E) = \bigcup\limits_{0\le p\le n} \varSigma_p(L,E),
$$
$$
\sigma_{\pi ,k} = \bigcup\limits_{k\le p\le n} \varSigma_p (L,E)\bigcup\{ f\in L^{2^\perp} \colon R(d_k(f)) \hbox{ is closed }\},
$$
where $ 0\le k\le n$.\end{df}

\indent We shall see that $\sigma_{\delta ,k} (L,E)$ and $\sigma_{\pi ,k}(L,E)$ are
compact non void subsets of $L^*$ and that they verify the projection property for
ideals.\par
\indent Observe that $\sigma_{\delta , n} (L,E)$  = $\sigma_{\pi , 0} (L,E)$ = $Sp(L,E)$.\par
\indent Let us show that $\sigma _{\delta ,k} (L,E)$ has the usual properties of a
spectrum.\par

\begin{thm} Let $L$ and $E$ be as usual. Then $\sigma_{\delta,k} (L,E)$
is a compact subset  of $L^*$.\end{thm}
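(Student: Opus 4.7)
The plan is to exploit the fact that $\sigma_{\delta,k}(L,E)\subseteq L^{2^\perp}$ sits in a finite-dimensional subspace of $L^*$, so that compactness reduces to boundedness and closedness, which I would treat separately.

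For boundedness, observe that $\sigma_{\delta,k}(L,E)$ is a finite union of the sets $\Sigma_p(L,E)$, and each $\Sigma_p(L,E)\subseteq Sp(L,E)$, since $H_p(E\otimes\wedge L,d(f))\neq 0$ for some $p$ forces $H_*(E\otimes\wedge L,d(f))\neq 0$. Hence $\sigma_{\delta,k}(L,E)\subseteq Sp(L,E)$, which is compact by Theorem 2.3; therefore $\sigma_{\delta,k}(L,E)$ is bounded.

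For closedness I would show that each $\Sigma_p(L,E)$ is closed in $L^{2^\perp}$, or equivalently that its complement $\{f\in L^{2^\perp}: H_p(E\otimes\wedge L,d(f))=0\}$ is open. From formula (1), $f\mapsto d_p(f)$ is affine in $f$, hence continuous in the operator-norm topology. The technical heart is the classical stability principle for Banach complexes: if $X_{p+1}\xrightarrow{A} X_p\xrightarrow{B} X_{p-1}$ satisfies $BA=0$ and $\ker B=\operatorname{range} A$, then for all bounded $A'$, $B'$ sufficiently close to $A$, $B$ in operator norm with $B'A'=0$, one still has $\ker B'=\operatorname{range} A'$. This follows from the perturbation theory of semi-Fredholm operators: exactness makes $\operatorname{range} A$ closed, the reduced minimum modulus of $A$ is then strictly positive, and its lower semicontinuity under small perturbations, combined with the identity $d_{p-1}(f)d_p(f)=0$ valid for every $f\in L^{2^\perp}$ (this is the character condition defining the Chevalley--Eilenberg complex), ensures that exactness at $p$ persists in a neighborhood of $f_0$.

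The main obstacle is this stability statement; it is essentially the same ingredient used by S\l odkowski for commuting tuples in [4], and I would adapt it with minimal changes to the Lie-algebraic setting, since the Koszul differentials still depend affinely on $f$ and square to zero for every $f\in L^{2^\perp}$. One additional bookkeeping point is that, to handle the union defining $\sigma_{\delta,k}$, the neighborhood of $f_0$ on which exactness is preserved must be chosen independently of $p$; this is automatic since $\{0,\dots,k\}$ is finite and one simply intersects finitely many open neighborhoods.
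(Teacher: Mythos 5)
Your reduction of compactness to closedness via the inclusion $\sigma_{\delta,k}(L,E)\subseteq Sp(L,E)$ is exactly the paper's first step, but the closedness argument has a genuine gap: you propose to show that each $\Sigma_p(L,E)$ is separately closed, and this is false in general. Already in the one-dimensional commutative case (a single operator $T$, where the complex is $0\to E\xrightarrow{T-z}E\to 0$), the set $\Sigma_1=\{z: H_1\neq 0\}$ is the point spectrum of $T$, which for the backward shift is the \emph{open} unit disc. The same example refutes the ``classical stability principle'' as you state it: at $|z_0|=1$ the three-term complex $0\to E\xrightarrow{T-z_0}E$ is exact at the middle spot (the incoming map is zero, with trivially closed range), yet exactness fails at every nearby $z$ with $|z|<1$. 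Exactness at a single interior spot of a Banach complex is simply not an open condition, and the reduced minimum modulus is not lower semicontinuous under arbitrary norm-small perturbations; kernels of the outgoing map can jump up. This is precisely why S\l odkowski defines $\sigma_{\delta,k}$ as the union $\bigcup_{0\le p\le k}\Sigma_p$ rather than as a single $\Sigma_k$.

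The correct mechanism, and the one the paper uses, is that the \emph{complement} of $\sigma_{\delta,k}(L,E)$ is the set of $f$ for which the truncated complex
$$
E\otimes\wedge^{k+1}L\xrightarrow{d_{k+1}(f)}E\otimes\wedge^{k}L\to\cdots\to E\otimes L\xrightarrow{d_1(f)}E\to 0
$$
is exact at \emph{all} spots $0,\dots,k$; exactness of such a complex terminating in $\to 0$ \emph{is} an open condition in the parameter (Taylor's stability theorem for parametrized chain complexes of Banach spaces, [5, Theorem 2.1]). That proof works inductively from the bottom: surjectivity of $d_1(f)$ is open, and exactness at spot $p$ is stable \emph{given} exactness at all lower spots, which is what supplies the splittings/liftings your pointwise perturbation argument lacks. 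So you should replace ``each $\Sigma_p$ is closed'' by ``the set where the truncated complex fails to be exact is closed,'' and cite (or reprove) the parametrized-exactness stability theorem; your affine dependence of $d_p(f)$ on $f$ and the identity $d_{p}(f)d_{p+1}(f)=0$ for $f\in L^{2^\perp}$ are exactly the hypotheses needed to apply it.
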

\begin{proof}
As $\sigma_{\delta ,k} (L,E)$ is contained in $Sp(L,E)$, it is enough to prove 
that $\sigma_{\delta ,k} (L,E)$ is closed in $L^*$.
Let us consider the complex $(E\otimes\wedge^p L, d_k (f))$ $( 0\le k\le p+1)$.   
$$
E\otimes\wedge^{p+1} L \xrightarrow{d_{p+1}(f)} E\otimes\wedge^p L\to \cdots\to E\otimes L \xrightarrow{d_1(f)} E\to 0.
$$
This complex is a parameterized chain complex of Banach spaces on $L^{2^\perp}$
in the sense of [5, Definition 2.1]. By [5, Theorem 2.1], $\{ f\in L^{2^\perp}\colon(E\otimes\wedge^kL,d_k(f))$ $(0\le k\le p+1)$ is not exact$\}$
is a closed set of $L^{2^\perp}$, and then in $L^*$. However, this set is
exactly $\sigma_{\delta ,k}(L, E)$.\end{proof}

\indent Let $L_{n-1}$ be an ideal of codimension 1 of $L$ and let $x_n$ in $L$ be
such that $L_{n-1}\oplus\langle x_n\rangle = L$. As in [5] and [1], we consider
a short exact sequence of complex.
Let $f$ belong to $L^*$ and denote $\tilde f$ its restriction to $L_{n-1}$. Then
$$
0\to (E\otimes\wedge L_{n-1},\tilde d (\tilde f))\xrightarrow{i}(E\otimes\wedge L,d(f))\xrightarrow{p}(E\otimes\wedge L_{n-1},\tilde d(\tilde f))\to 0, \eqno(18)
$$
where $p$ is the following map.
As in section 2 we decompose $E\otimes\wedge^k L$,
$$
E\otimes\wedge^k L = E\otimes\wedge^k L_{n-1}\oplus E\otimes\wedge^{k-1}L_{n-1}\wedge\langle x_n\rangle,
$$
$$
p(E\otimes\wedge^k L_{n-1}) =0,
$$
$$
p(e\langle x_1\wedge\dots\wedge x_{k-1}\wedge x_n\rangle) = {(-1)}^{k-1}e\langle x_1\wedge\dots\wedge x_{k-1}\rangle,
$$
\noindent where $x_i \in L_{n-1}$, $1\le i\le k-1$.\par

\begin{rema}
\rm (i) $d(f)\vert E\otimes\wedge L_{n-1} = \tilde d(\tilde f)$.
\par
\noindent (ii) As in [1], $H_p(E\otimes\wedge L,d(f)) = Tor_p^{(UL)} (E, C(f))$.
\par
\noindent (iii) As in [5] and [1], we have a long exact sequence of $ U(L)$ modules, where
$U(L)$ is the universal algebra of L
$$
\to  H_p(E\otimes \wedge L_{n-1}, \tilde d(\tilde f))\xrightarrow{ i_*} H_p(E\otimes\wedge L, d(f))\xrightarrow{ p_*}
$$
$$
\to H_{p-1} (E\otimes \wedge L_{n-1}, \tilde d(\tilde f))\xrightarrow{\delta_{* p-1}} H_{p-1} (E\otimes \wedge L_{n-1},\tilde d(\tilde f))\to,
$$
where $\delta{_* p} $ is the connecting operator.\par

\noindent (iv) As in [1], we observe that if we regard the $U(L) $ module $H_P(E\otimes\wedge
L_{n-1},\tilde d(\tilde f))$  as $U(L_{n-1})$ module, then we obtain $Tor_p^{U(L_{n-1})}(E,C(\tilde f))$.
Then, as $U(L_{n-1})$ is a subalgebra with unit of $U(L)$,
$$
\tilde f \in \sum_p (L_{n-1},E) \hbox{ if and only if }H_p(E\otimes\wedge L_{n-1},\tilde d(\tilde f))\ne 0 \hbox{ as U(L) module}.
$$
\end{rema}

\begin{pro} Let $L$, $L_{n-1}$, $E$, $f$, and $\tilde f$ be
as above. Then, if $f$ belongs to $\varSigma_p(L,E)$, $\tilde f$ belongs to
$\varSigma_{p-1}(L_{n-1}, E) \bigcup \varSigma_p(L_{n-1}, E)$.\end{pro}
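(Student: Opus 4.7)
The plan is to exploit the long exact sequence in homology associated to the short exact sequence of complexes (18), which is already recorded in Remark 4.2(iii). This reduces the proposition to a purely algebraic case analysis with no analytic input.

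First I would write down the relevant piece of the long exact sequence at degree $p$, namely
$$
H_p(E\otimes\wedge L_{n-1}, \tilde d(\tilde f))\xrightarrow{i_*} H_p(E\otimes\wedge L, d(f))\xrightarrow{p_*} H_{p-1}(E\otimes\wedge L_{n-1}, \tilde d(\tilde f)),
$$
keeping in mind that the quotient map $p$ drops degree by one, so the rightmost term is indeed indexed by $p-1$. The hypothesis $f\in\varSigma_p(L,E)$ says precisely that the middle group is nonzero.

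Next I would split into two cases according to whether $p_*$ is injective. If $\ker(p_*)\ne 0$, then by exactness at $H_p(E\otimes\wedge L, d(f))$ we have $\mathrm{im}(i_*)\ne 0$, and in particular $H_p(E\otimes\wedge L_{n-1}, \tilde d(\tilde f))\ne 0$, so $\tilde f\in\varSigma_p(L_{n-1},E)$. If instead $p_*$ is injective, then its image inside $H_{p-1}(E\otimes\wedge L_{n-1}, \tilde d(\tilde f))$ is nonzero, so this latter group must be nonzero, giving $\tilde f\in\varSigma_{p-1}(L_{n-1},E)$. In either case $\tilde f$ lies in the union $\varSigma_{p-1}(L_{n-1},E)\cup\varSigma_p(L_{n-1},E)$.

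There is no real obstacle here; the only subtlety worth double-checking is the degree shift induced by the projection $p$ defined just after (18), to be sure that the connecting map lands in $H_{p-1}$ rather than $H_p$ of the subcomplex, and that the change of ring from $U(L_{n-1})$ to $U(L)$ observed in Remark 4.2(iv) does not affect the vanishing statement, both of which follow directly from the definitions already given. Hence the proof is essentially a two-line diagram chase on the long exact sequence.
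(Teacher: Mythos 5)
Your argument is correct and is essentially the paper's own proof: both rest on the long exact sequence of Remark 4.3(iii) coming from the short exact sequence (18), the paper phrasing it contrapositively (if both $H_p$ and $H_{p-1}$ of the subcomplex vanish, the middle term is squeezed to zero) while you run the same diagram chase directly. The degree shift and the $U(L)$ versus $U(L_{n-1})$ point you flag are exactly the details the paper's Remark 4.3(iv) handles, so there is no gap.
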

\begin{proof}
By Remark 4.3 (iv), $\tilde f$ $\not\in $ $\varSigma_p(L_{n-1},E)\bigcup\varSigma_{p-1}(L_{n-1},E)$
if and only if $H_i (E\otimes\wedge L_{n-1}, \tilde d(\tilde f)) = 0 $ 
as $U(L)$ module ($i =p$, $p-1$ ).
By Remark 4.3 (iii), $H_p(E\otimes\wedge L, d(f)) = 0$, i. e., $f\not\in \varSigma_p(L,E)$.
\end{proof}
\begin{pro} Let $L$, $L_{n-1}$ and $E$ be as usual. Let $\varPi\colon L^* \to L^*_{n-1}$
be the projection map. Then
$$
\varPi(\sigma_{\delta ,k}(L,E))\subseteq \sigma_{\delta ,k}(L_{n-1},E).
$$
\end{pro}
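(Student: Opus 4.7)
The plan is a direct reduction to Proposition 4.4. Take $f\in\sigma_{\delta,k}(L,E)$, so by definition $f\in\varSigma_p(L,E)$ for some $p$ with $0\le p\le k$. Applying Proposition 4.4 to this $p$, the restriction $\tilde f=\varPi(f)$ to $L_{n-1}$ lies in $\varSigma_{p-1}(L_{n-1},E)\cup\varSigma_p(L_{n-1},E)$. Since both $p-1$ and $p$ still satisfy the bound $\le k$, each component is contained in $\sigma_{\delta,k}(L_{n-1},E)$; hence $\tilde f\in\sigma_{\delta,k}(L_{n-1},E)$, which is exactly the desired inclusion.

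The only bookkeeping point is the boundary case $p=0$: here the symbol $\varSigma_{-1}(L_{n-1},E)$ does not appear in the statement of Proposition 4.4, but in this regime the complex vanishes in negative degrees, so $H_{-1}=0$ and the conclusion of Proposition 4.4 collapses to $\tilde f\in\varSigma_0(L_{n-1},E)\subseteq\sigma_{\delta,k}(L_{n-1},E)$. Consequently there is no real obstacle: the substantive work has already been carried out in Proposition 4.4, which itself rests on the short exact sequence (18) of complexes, the associated long exact sequence in homology recorded in Remark 4.3(iii), and the module-theoretic observation of Remark 4.3(iv) that identifies the vanishing of $H_i(E\otimes\wedge L_{n-1},\tilde d(\tilde f))$ as a $U(L)$-module with its vanishing as a $U(L_{n-1})$-module. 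The role of the present proposition is simply to repackage that homological fact as the projection property for the $\delta$-type S\l odkowski spectrum, in close analogy with Theorem 2.4 for the full spectrum $Sp(L,E)$.
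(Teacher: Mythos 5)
Your argument is correct and is exactly the paper's: the paper's proof consists of the single line ``Is a consequence of Proposition 4.4,'' and you have simply spelled out the bookkeeping, including the harmless boundary case $p=0$ where $\varSigma_{-1}(L_{n-1},E)$ is empty. (You have also implicitly, and correctly, read Definition 4.1 as $\sigma_{\delta,k}=\bigcup_{0\le p\le k}\varSigma_p$ rather than the literal $\bigcup_{0\le p\le n}$, which is an evident typo since otherwise $\sigma_{\delta,k}$ would not depend on $k$.)
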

\begin{proof}
Is a consequence of Proposition 4.4.
\end{proof}
\begin{pro} Let $L$, $L_{n-1}$ and $E$ be as usual and
consider $\varPi$ as in Proposition 4.5. Then,
$\sigma_{\delta ,k}(L_{n-1},E)$  = $\varPi(\sigma_{\delta ,k},(L,E))$.
\end{pro}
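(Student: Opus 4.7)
The plan is to establish the reverse inclusion $\sigma_{\delta,k}(L_{n-1},E) \subseteq \Pi(\sigma_{\delta,k}(L,E))$, which combined with Proposition 4.5 yields the projection equality. Given $\tilde f \in \sigma_{\delta,k}(L_{n-1},E)$, for each $z\in \mathbb{C}$ let $f_z\in L^*$ be the extension of $\tilde f$ with $f_z(x_n)=z$; since $\Pi(f_z)=\tilde f$ automatically, it suffices to exhibit one $z$ with $f_z\in \sigma_{\delta,k}(L,E)$.

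Let $p_0$ be the smallest integer in $\{0,\dots,k\}$ for which $H_{p_0}(E\otimes\wedge L_{n-1},\tilde d(\tilde f))\neq 0$. I would apply the long exact sequence of $U(L)$-modules from Remark 4.3(iii) with $f=f_z$. The standard identification carried out in [5] and [1] shows that the connecting operator $\delta_{*p_0}$ on $H_{p_0}(E\otimes\wedge L_{n-1},\tilde d(\tilde f))$ coincides, up to sign, with the operator induced by $x_n - z$ via the residual $U(L)$-module structure. If this connecting operator fails to be surjective for some $z$, then exactness of the sequence at the second copy of $H_{p_0}(E\otimes\wedge L_{n-1},\tilde d(\tilde f))$ forces $i_*$ to have non-trivial image in $H_{p_0}(E\otimes\wedge L,d(f_z))$, so $f_z\in\Sigma_{p_0}(L,E)\subseteq \sigma_{\delta,k}(L,E)$ and we are done.

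The crux is therefore to produce a $z$ for which $\delta_{*p_0}$ is not surjective. In favorable cases this is elementary: if $H_{p_0}(E\otimes\wedge L_{n-1},\tilde d(\tilde f))$ were a non-zero Banach space, the boundary of the spectrum of $x_n$ on it would already lie in the surjective-defect spectrum, which is thus non-empty. In general, however, $H_{p_0}$ need not be separated because $\operatorname{Im} d_{p_0+1}(\tilde f)$ need not be closed, and this is the main obstacle. It is precisely what the parameterized exactness result [4, Lemma 2.1] (equivalently [5, Theorem 2.1]), applied to the family $\{d(f_z)\}_{z\in\mathbb{C}}$ built via the decomposition of section 2, is designed to circumvent. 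Once that tool delivers the required $z$, the remainder of the argument is a routine diagram chase in the long exact sequence and a verification that $p_0\le k$ keeps us inside the prescribed range defining $\sigma_{\delta,k}$.
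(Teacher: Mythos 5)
Your architecture matches the paper's: reduce to the reverse inclusion, extend $\tilde f$ by the free parameter $z=f(x_n)$, run the long exact sequence of Remark 4.3(iii), identify the connecting operator $\delta_{*p}$ with (essentially) $x_n-z$ acting on $H_p(E\otimes\wedge L_{n-1},\tilde d(\tilde f))$, and conclude by the diagram chase you describe. The paper computes $\delta_{*p}(f)[k]=[k](x_n-f(x_n))-[\theta(x_n)k]$ explicitly, so your ``standard identification'' is correct, but note the extra term $\theta(x_n)$: one must check that this perturbation of Taylor's connecting map is harmless (the paper records that $\delta_{*p}$ ``differs by a constant term'' from the connecting map of [5, Lemma 1.3]).

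The gap sits exactly at the step you yourself flag as the crux. You appeal to ``[4, Lemma 2.1] (equivalently [5, Theorem 2.1])'' to produce a $z$ at which $\delta_{*p_0}$ fails to be surjective, but neither result does that job, and they are not equivalent to one another: [5, Theorem 2.1] only asserts that the set of parameters where a parameterized complex fails to be exact is closed (this is what gives compactness in Theorem 4.2), while [4, Lemma 2.1] relates a complex to its adjoint (this is what drives Section 3). The tool that actually closes the argument is [5, Lemma 3.1]. The paper argues by contradiction: if $H_p(E\otimes\wedge L,d(f_z))=0$ for every $z$, then the three-term complex (19) is an analytically parameterized complex on $\mathbb{C}$, exact for all $z$ and exact at $\infty$, and Taylor's Lemma 3.1 argument forces $H_p(E\otimes\wedge L_{n-1},\tilde d(\tilde f))=0$ as a $U(L)$-module, contradicting Remark 4.3(iv). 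Your remark about the boundary of the spectrum is precisely the Banach-space heuristic behind that lemma, but since $H_p$ need not be Hausdorff you cannot run it on the homology itself; you must invoke (and verify the hypotheses of, in particular exactness at $\infty$) [5, Lemma 3.1] at the level of the complex. Finally, your choice of the minimal $p_0$ is unnecessary: the argument works for any $p\le k$ with $\tilde f\in\varSigma_p(L_{n-1},E)$ and returns an extension lying in $\varSigma_p(L,E)$ for the same $p$, which is what keeps the index in the range defining $\sigma_{\delta,k}$.
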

\begin{proof}
By Proposition 4.5 it is enough to show that
$$
\sigma_{\delta ,k}(L_{n-1}, E) \subseteq \varPi(\sigma_{\delta ,k}(L,E)).
$$  
By refining an argument of [1] and [5], we shall see that if $\tilde f$ 
belongs to $\varSigma_p(L_{n-1}, E)$, then there is an extension of $\tilde f$
to $L^*$,$f$, such that $f \in \varSigma_p(L,E)$.\par
First of all, as $\tilde f$ $\in \varSigma_p(L_{n-1}, E)\subseteq Sp(L,E)$,
by [1, Theorem 3], if $g$ is an extension of $\tilde f$ to $L^*$, $g(L^2) =0$, i.e., $g\in L^{2^\perp}$.\par
Let us suppose that our claim is false, equivalently, $H_p(E\otimes\wedge L,d(f)) = 0 $,
$\forall f \in L^{2^\perp} , \varPi(f) = \tilde f$.\par
Let us consider the connecting map associated to the long exact sequence
of Remark 4.3 (iii), 
$$
\delta_{* p}\colon H_p(E\otimes\wedge L_{n-1},\tilde d(\tilde f))\to H_p(E\otimes\wedge L_{n-1},\tilde d (\tilde f)).
$$
As $H_p(E\otimes\wedge L, d(f)) =0, \forall f \in L^{2^\perp}$, $\varPi(f) =\tilde f$, $\delta_{* p}(f)$ is a surjective map.\par
Let $k$ $\in E\otimes\wedge^p L_{n-1}$ be such that $\tilde d(\tilde f)(k) =0$.
Then, it is well knowm that if $m\in E\otimes\wedge^{p+1} L$ is such that
$ p(m) = k$, $ \delta_{* p}([k]) = [d_{p+1}(f)(m)]$, where $p$ is the map
defined in (18).
Let us consider $m =k\wedge x_n$ $\in E\otimes\wedge^p L_{n-1}\wedge\langle x_n\rangle$.
Then, $p((-1)^p m) = k$ and
$$
\delta_{* p}(f)([k])=(-1)^p [d_{p+1}(f)(m)].
$$
Since,
\begin{align*} d_{p+1}(f)(m)&= d_{p+1}(f)(k\wedge\langle x_n\rangle)\\ 
&= (\tilde d(\tilde f)(k))\wedge\langle x_n\rangle + (-1)^{p+1} L_{p+1}(k)\\
&= (-1)^{p+1}L_{p+1}(k) \in (E\otimes\wedge^p L_{n-1}),\\ \end{align*}
$$
\delta_{* p}(f)[k] = -[L_{p+1}(k)].
$$
Moreover, by Equations (4)and (5), $ L_{p+1}(k) = -k(x_n - f(x_n) + \theta(x_n)(k)$.
Then

\begin{align*} 0&= \tilde d_p(\tilde f)(d_{p+1}(f)(m))  \\
&= \tilde d_p(\tilde f)(-k(x_n - f(x_n)) + \tilde d_p(\tilde f)\theta (x_n)(k)\\
&= (-\tilde d_p(\tilde f)(k))(x_n - f(x_n)) + \tilde d_p(\tilde f)\theta(x_n)(k)\\
&= \tilde d_p(\tilde f)\theta(x_n)(k),\\ \end{align*}

\noindent which implies that,
$$
\delta_{* p}(f) = [k](x_n - f(x_n)) - [\theta(x_n)k].
$$
Let us consider the complex of Banach spaces and maps
$$
E\otimes\wedge^{p+1} L\xrightarrow{d_{p+1}(f)} E\otimes\wedge^p L \xrightarrow{d_p(f)} E\otimes\wedge^{p-1} L.\eqno(19)
$$
Then, this is an analytically parameterized complex of Banach spaces on $\mathbb{C}$, which is
exact $\forall f\in L^{2^\perp}, \varPi(f) =\tilde f$, and exact at $\infty$ ([5, Section 2]).\par
\indent As $\delta_{* p}$ differs by a constant term of the connecting map of
[5, Lemma 1.3], the argument of [5, Lemma 3.1] still applies to the complex (19). Then
$H_p(E\otimes\wedge L_{n-1}, \tilde d(\tilde f)) = 0$ as $U(L)$ module.
By Remark 4.3 (iv) we finish the proof.
\end{proof}
\begin{thm} Let $ L$ and $E$ be as usual. Let $I$ be an ideal of $L$. Then
$$
\sigma_{\delta k}(I,E) = \varPi (\sigma_{\delta k}(L,E)),
$$
where $\varPi$ denotes the projection map.\end{thm}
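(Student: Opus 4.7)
The plan is to reduce the general case to Proposition 4.6 (the codimension-one projection property) by induction on the codimension of $I$ in $L$.

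First, I would construct a chain of ideals of $L$
$$I = I_0 \subset I_1 \subset \cdots \subset I_m = L,$$
with $m = \dim L - \dim I$ and $\dim I_j = \dim I_{j-1} + 1$. Since $L$ is solvable, the quotient $L/I$ is also solvable, so by the basic structure theory of solvable Lie algebras [2, Chapter V] it admits a complete flag of ideals. Pulling this flag back along the canonical projection $L \to L/I$ produces the required chain inside $L$. In particular each $I_{j-1}$ is an ideal of $L$, and \emph{a fortiori} an ideal of $I_j$ of codimension one, which is exactly what Proposition 4.6 needs.

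Second, I would apply Proposition 4.6 at each step. Let $\varPi_j\colon I_j^* \to I_{j-1}^*$ denote the restriction map. Invoking the proposition with ambient algebra $I_j$ and codimension-one ideal $I_{j-1}$ gives
$$\sigma_{\delta,k}(I_{j-1}, E) = \varPi_j\bigl(\sigma_{\delta,k}(I_j,E)\bigr)$$
for every $j = 1,\ldots, m$.

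Finally, since the restriction $\varPi\colon L^* \to I^*$ factors as the composite $\varPi = \varPi_1 \circ \varPi_2 \circ \cdots \circ \varPi_m$, iterating the previous equality yields
$$\sigma_{\delta,k}(I,E) = \varPi\bigl(\sigma_{\delta,k}(L,E)\bigr),$$
as required. The only delicate point is ensuring that every intermediate $I_{j-1}$ is genuinely an ideal of $I_j$ (otherwise Proposition 4.6 does not apply), but this is automatic once the flag is built from ideals of $L$; beyond that the argument is a routine induction and I do not expect a serious obstacle.
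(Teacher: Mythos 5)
Your proposal is correct and follows essentially the same route as the paper: the paper's proof is just the one-line invocation of the structure theory in [2, Chapter V, Section 3] (existence of a chain of ideals from $I$ up to $L$, each of codimension one in the next) together with the codimension-one projection property of Proposition 4.6 and induction. You have merely spelled out the details that the paper leaves implicit.
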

\begin{proof}
By [2, Chapter V, Section 3], Proposition 5 and an inductive argument, we conclude the proof of the theorem.
\end{proof}
\begin{thm} Let $L$ and $E$ be as usual. Then $\sigma_{\delta k}(L,E)$
is a non void set of $L^*$.\end{thm}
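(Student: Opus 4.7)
The plan is to deduce non-voidness of $\sigma_{\delta,k}(L,E)$ from the already established non-voidness of the full spectrum $Sp$, using the projection property (Theorem 4.7) just proved. The idea is to move the problem onto an ideal $I$ of $L$ of dimension exactly $k$, where $\sigma_{\delta,k}$ collapses to $Sp$.

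First I would invoke solvability of $L$ to obtain a flag of ideals $0 = L_0 \subset L_1 \subset \cdots \subset L_n = L$ with $\dim L_i = i$, which exists by [2, Chapter V, Section 3] (this is the very fact used repeatedly earlier in the paper, for instance in Theorem 4.7). Set $I = L_k$; this is an ideal of $L$ of dimension exactly $k$.

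Next I would observe that since $\dim I = k$, the exterior algebra satisfies $\wedge^p I = 0$ for $p > k$, so the Koszul-type complex $(E \otimes \wedge I, d(h))$ is zero in degrees above $k$ for every $h \in I^*$. Hence $\varSigma_p(I,E) = \emptyset$ for $p > k$, and consequently
$$
\sigma_{\delta,k}(I,E) \;=\; \bigcup_{0 \le p \le k} \varSigma_p(I,E) \;=\; \bigcup_{0 \le p \le \dim I} \varSigma_p(I,E) \;=\; Sp(I,E).
$$
By Theorem 2.3, $Sp(I,E)$ is non-void, so $\sigma_{\delta,k}(I,E) \neq \emptyset$.

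Finally, applying the projection property (Theorem 4.7) to the ideal $I \subset L$ gives $\sigma_{\delta,k}(I,E) = \varPi(\sigma_{\delta,k}(L,E))$, where $\varPi\colon L^* \to I^*$ is the restriction map. Since the left-hand side is non-empty, so is $\sigma_{\delta,k}(L,E)$. There is no real obstacle in this argument; it is a direct assembly of the preceding results. The only point to be careful about is the existence of an ideal of exactly dimension $k$, which is precisely where finite-dimensional solvability enters.
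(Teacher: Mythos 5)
Your argument is correct and rests on the same two pillars as the paper's one-line proof: the existence of a complete flag of ideals in a solvable Lie algebra ([2, Chapter V, Section 3]) and the projection property of Theorem 4.7. The difference is where you stop descending. The paper reduces all the way down to a one-dimensional ideal and then invokes ``the one dimensional case'', i.e.\ the classical fact that for a single bounded operator the relevant piece of the spectrum (for $k=0$, the surjectivity spectrum $\varSigma_0$) is non-empty. You instead stop at the ideal $L_k$ of dimension exactly $k$, observe that $\wedge^p L_k=0$ for $p>k$ forces $\sigma_{\delta,k}(L_k,E)=Sp(L_k,E)$, and quote Theorem 2.3. This buys you something real: you never have to verify a separate base case for $\sigma_{\delta,k}$, since all the analytic content is already packaged in the non-voidness of $Sp$. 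Two small caveats. First, your computation uses the (surely intended) definition $\sigma_{\delta,k}=\bigcup_{0\le p\le k}\varSigma_p$ rather than the union over $0\le p\le n$ literally printed in Definition 4.1, which would make the statement a trivial restatement of Theorem 2.3. Second, for $k=0$ your ideal is $L_0=0$, so you are applying Theorem 4.7 to the zero ideal; this is formally fine ($Sp(0,E)=\{0\}$ because $H_0=E\ne 0$, and the image of a set under the projection onto $0^*$ is non-empty exactly when the set is), but it silently pushes the whole analytic burden into the last step of the induction behind Theorem 4.7, and it may be cleaner there to stop at $L_1$ and use the non-voidness of the surjectivity spectrum of a single operator, which is essentially what the paper does.
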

\begin{proof}
It is a consequence of [2, Chapter V, Section 3],Theorem 5 and the one dimensional case.
\end{proof}
\begin{thm}Let $L$ and $E$ be as usual. Let  $L^{'}$, $\{x_i\}_{1\le i\le n}$
 and $\tilde\theta(x_i)$ ($1\le i \le n$) be as in Theorem 3.2.
Then, in terms of the dual basis of $\{ x_i\}_{1\le i\le n}$, \par
\noindent {\rm (i)}  $ \sigma_{\delta k}(L,E) + (\hbox{\rm trace }\tilde\theta(x_1),\dots , \hbox{\rm trace }\tilde\theta(x _n)) = \sigma_{\pi k}(L^{'},E^*)$.\par
\noindent {\rm (ii)} $\sigma_{\pi k}(L,E) = \sigma_{\delta n-k}(L^{'},E) -(\hbox{\rm trace }\tilde\theta(x_1), \dots , \hbox{\rm trace }\tilde\theta(x_n))$.
\end{thm}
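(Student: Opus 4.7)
The plan is to adapt the argument that proved Theorem 3.3 to the finer invariants $\sigma_{\delta,k}$ and $\sigma_{\pi,k}$. The only ingredients I would invoke are the chain-complex isomorphism produced by Theorem 3.2 and the S\l odkowski duality Lemma [4, Lemma 2.1]; the novelty relative to Theorem 3.3 is that one must track not only non-vanishing of homology but also closedness of the boundary range.

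The first step is to extract from Theorem 3.2 exactly what is needed. Setting $c=(\hbox{trace}\,\tilde\theta(x_{1}),\dots,\hbox{trace}\,\tilde\theta(x_{n}))$ and $g=f+c$, the diagram there shows that $\rho$ (up to the sign introduced by $w$) is an isomorphism of chain complexes from the adjoint complex $(E^{*}\otimes\wedge L,d^{*}(f))$ to $(E^{*}\otimes\wedge L',d(g))$, with the degree reversal $p\mapsto n-p$. In particular,
$$
H^{p}\bigl(E^{*}\otimes\wedge L,d^{*}(f)\bigr)\cong H_{n-p}\bigl(E^{*}\otimes\wedge L',d(g)\bigr),
$$
and $R(d^{*}_{p+1}(f))$ is closed if and only if $R(d'_{n-p}(g))$ is closed.

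The second step is to apply [4, Lemma 2.1]. In the form I would use, it asserts that, for an analytically parameterized chain complex of Banach spaces, $H_{p}=0$ together with the closedness of $R(d_{p+1})$ is equivalent, via the adjoint, to vanishing of the cohomology $H^{p}$ together with the closedness of $R(d_{p}^{*})$. Iterating this equivalence over the index range that defines the complement of $\sigma_{\delta,k}(L,E)$ and then transporting everything through the isomorphism of the first step converts that complement into the complement of the corresponding $\sigma_{\pi,\bullet}(L',E^{*})$, translated by $c$; this gives (i). For (ii) I would run the same chain of equivalences in the opposite direction, interchanging the roles of $(L,E)$ and $(L',E^{*})$ and reading the identity backwards, so that the index $k$ gets replaced by $n-k$ as in the statement.

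The main obstacle is purely bookkeeping, not content: one has to match the unique range-closedness condition appearing in the definition of $\sigma_{\pi,k}$ against the unique range-closedness condition thrown up at the boundary of the index range by [4, Lemma 2.1], and check that the reindexing $p\mapsto n-p$ carries the union defining $\sigma_{\delta,k}$ onto the union describing the corresponding $\sigma_{\pi,\bullet}$ with the correct second subscript. No new analytical content is required beyond Theorem 3.2 and [4, Lemma 2.1].
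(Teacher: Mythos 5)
Your proposal is correct and follows exactly the paper's route: the paper's proof of this theorem consists precisely of invoking [4, Lemma 2.1] together with the chain-complex isomorphism of Theorem 3.2, which is what you do. Your expanded bookkeeping of the degree reversal $p\mapsto n-p$ and of the range-closedness conditions is a faithful elaboration of the argument the paper leaves implicit.
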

\begin{proof}
It is a consequence of [4, Lemma 2.1] and Theorem 3.2.\end{proof}
\begin{thm}Let $L$ and $E$ be as usual. Then $\sigma_{ \pi k}(L,E)$
is a compact subset of $L^*$ and if $I$ is an ideal of $L$ and $\varPi$ is  the
projection map from $L^*$ onto $I$, then
$$
\sigma_{\pi k}(L,E) =\varPi (\sigma_{\pi k}(L,E)).
$$
\end{thm}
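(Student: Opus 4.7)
The plan is to deduce both assertions from the corresponding results for $\sigma_{\delta, k}$ established in Theorems 4.2 and 4.7, by means of the duality identity of Theorem 4.9(ii), which expresses $\sigma_{\pi k}(L,E)$ as a translate of the $\sigma_{\delta, n-k}$-spectrum of the opposite algebra $L'$ acting on the dual space $E^*$.

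For compactness: since $L'$ is a complex finite-dimensional solvable Lie algebra acting as continuous operators on the Banach space $E^*$, Theorem 4.2 applies to the pair $(L', E^*)$ and yields that $\sigma_{\delta, n-k}(L', E^*)$ is a compact subset of $(L')^{*}=L^{*}$. Translation by the fixed vector $-(\textrm{trace}\,\tilde\theta(x_1), \ldots, \textrm{trace}\,\tilde\theta(x_n))$ is a homeomorphism of $L^{*}$, so the translate $\sigma_{\pi k}(L,E)$ is compact as well.

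For the projection property $\sigma_{\pi k}(I,E)=\varPi(\sigma_{\pi k}(L,E))$, let $I$ be an ideal of $L$. Since $L'$ has the opposite bracket of $L$, the subspace $I$ is also an ideal of $L'$. Choose the adapted basis of Theorem 3.2 so that its first $m=\dim I$ vectors form an adapted basis of $I$. Then I would apply Theorem 4.9(ii) to each of $L$ and $I$, rewriting both sides of the desired identity as translates of $\sigma_{\delta}$-spectra of the opposite algebras acting on $E^*$, and invoke Theorem 4.7 to commute the projection $\varPi$ with the $\sigma_\delta$-spectrum of the pair $(L', I)$. The trace vector of $I$ is simply the restriction of the trace vector of $L$ to $I^{*}$, because for $1\le j\le m$ the entry $\textrm{trace}\,\tilde\theta(x_j)$ depends only on the subalgebra $L_j\subseteq I$ and is the same whether computed in $I$ or in $L$; hence translation commutes with $\varPi$.

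The principal obstacle is the mismatch of the index shifts $n-k$ and $m-k$ which appear when Theorem 4.9(ii) is applied to $L$ and to $I$ respectively. To reconcile them, I would use that the complex $(E^{*}\otimes\wedge I, d'(\cdot))$ has length $m$, so that $\varSigma'_p(I,E^*)=\emptyset$ for $p>m$, and then combine this truncation with the closed-range clause in the definition of $\sigma_{\pi,k}$ to match the two translates. Should this bookkeeping prove awkward, the fallback is a direct argument parallel to Propositions 4.4--4.6 and Theorem 4.7: reduce to the codimension-one case via Jacobson's flag of ideals from [2], and establish the $\sigma_{\pi}$-analogue using the exact sequence (18) together with the analytically parameterized complex techniques of [5, Lemma 3.1], which handle the closed-range condition in addition to exactness.
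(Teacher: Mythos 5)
Your proposal follows the paper's own argument exactly: the paper proves this theorem by simply citing Theorems 4.2, 4.7, 4.8 and the duality of Theorem 4.9, i.e., by transferring compactness and the projection property from $\sigma_{\delta}$ applied to $(L',E^*)$ back to $\sigma_{\pi k}(L,E)$ via the translation by the trace vector, just as you do. The index-shift and trace-vector bookkeeping you raise is not addressed in the paper at all (its proof is a one-line citation), so your treatment is, if anything, more careful than the original.
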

\begin{proof} It is a consequence of Theorems 4.2, 4.7, 4.8 and 4.9.\end{proof}

\begin{rema}\rm In [1] it was proved that the projection property for
subspaces that are not ideals fails for $Sp(L,E)$. As $\sigma_{\pi 0}(L,E)$=
$\sigma_{\delta n}(L,E)$ =$ Sp(L,E)$, the same result remains true, in
general, for $\sigma_{\delta k}(L,E)$  and $\sigma_{\pi k}(L,E)$.
\end{rema}
\begin{thm} Let $L$ be a nilpotent Lie algebra and let $E$, $L^{'}$, $\{x_i\}_{1\le i \le n}$
and $\tilde\theta(x_i)$ ($1\le i\le n$) be as in Theorem 3.2. Then,\par
\noindent {\rm (i)} $\sigma_{\delta k}(L,E) =\sigma_{\pi k}(L^{'},E^*)$.\par
\noindent {\rm (ii)} $\sigma_{\pi k}(L,E) = \sigma_{\delta n-k}(L^{'},E^*)$.\par
\end{thm}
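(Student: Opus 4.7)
The plan is to follow exactly the same template as the proof of Theorem 3.4, but now feeding the nilpotent information into Theorem 4.9 instead of into Theorem 3.3. In other words, I want to choose a basis so that the two translation vectors that appear on the right-hand sides of Theorem 4.9(i)-(ii) both vanish, after which the two claimed equalities are immediate substitutions.

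First, since $L$ is nilpotent, by [2, Chapter V, Section 1] I may refine the basis $\{x_i\}_{1\le i\le n}$ of (10) so that, moreover,
$$
[x_j, x_i] = \sum_{h=1}^{i-1} c^h_{ji} x_h \qquad (j > i).
$$
This is the same input used in the proof of Theorem 3.4. The point of this sharper form is that $\mathrm{ad}(x_i)$ then sends $L_i$ into $L_{i-1}$: for every $l\le i$, the bracket $[x_i, x_l]$ has its expansion supported on $\{x_1,\dots,x_{l-1}\}\subseteq\{x_1,\dots,x_{i-1}\}$. Consequently, in the basis $\{x_1,\dots,x_i\}$ of $L_i$, the endomorphism $\tilde\theta(x_i)$ is represented by a strictly upper-triangular matrix, so $\mathrm{trace}\,\tilde\theta(x_i) = 0$ for every $1\le i\le n$.

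Having killed the translation vector, I simply specialize Theorem 4.9. Substituting $\mathrm{trace}\,\tilde\theta(x_i) = 0$ into
$$
\sigma_{\delta,k}(L,E) + (\mathrm{trace}\,\tilde\theta(x_1),\dots,\mathrm{trace}\,\tilde\theta(x_n)) = \sigma_{\pi,k}(L', E^*)
$$
yields (i), and substituting into
$$
\sigma_{\pi,k}(L,E) = \sigma_{\delta,n-k}(L', E^*) - (\mathrm{trace}\,\tilde\theta(x_1),\dots,\mathrm{trace}\,\tilde\theta(x_n))
$$
yields (ii). There is no genuine obstacle here beyond verifying that the nilpotent refinement of the basis trivializes \emph{every} component of the translation vector simultaneously, which is exactly what the strict upper-triangularity of each $\tilde\theta(x_i)$ provides. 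The argument is the Slodkowski analogue of Theorem 3.4, built directly on Theorem 4.9 in the same way Theorem 3.4 was built on Theorem 3.3.
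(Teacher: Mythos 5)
Your proposal is correct and follows the paper's own route: the paper proves this theorem precisely by combining Theorem 4.9 with the basis refinement from the proof of Theorem 3.4, which makes every $\hbox{\rm trace }\tilde\theta(x_i)$ vanish. Your added observation that each $\tilde\theta(x_i)$ is strictly triangular (hence traceless) is in fact a slightly more careful justification than the paper's bare assertion that $\tilde\theta(x_i)=0$.
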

\begin{proof} 
It is a consequence of Theorem 4.9 and the proof of Theorem 3.4.\end{proof}

\bibliographystyle{amsplain}

\vskip.5cm

Enrico Boasso\par
E-mail address: enrico\_odisseo@yahoo.it

\end{document}